\documentclass[11pt]{article}

\usepackage{dsfont,fullpage}
\usepackage{amsthm,amsmath,amssymb,enumerate,dsfont,amsbsy}

\newcommand{\tst}{\textstyle}

\newcommand{\m}[1]{\mathcal{#1}}
\newcommand{\bb}[1]{\mathds{#1}}
\newcommand{\ur}[1]{\mathrm{#1}}

\newcommand{\eps}{\varepsilon}

\newcommand{\oh}{\frac{1}{2}}

\newcommand{\R}{\bb R}
\newcommand{\N}{\bb N}

\renewcommand{\d}{\ur{d}}
\newcommand{\p}{\partial}

\renewcommand{\theta}{{\vartheta}}
\renewcommand{\O}{{\Omega}}
\newcommand{\oO}{\overline{\O}}
\newcommand{\pO}{\p \O}

\newcommand{\sia}[1]{{\mathsf{#1}}_i^\alpha}
\newcommand{\siha}[1]{{\mathsf{#1}}_i^\ha}

\newcommand{\hil}[1]{\hat{#1}_i^\ell}

\newcommand{\il}[1]{{#1}_i^\ell}
\newcommand{\ik}[1]{{#1}_i^k}
\newcommand{\kz}[1]{{#1}^k}

\newcommand{\iko}[1]{{#1}_i^{k+1}}
\newcommand{\ko}[1]{{#1}^{k+1}}
\newcommand{\ia}[1]{{#1}_i^\alpha}

\newcommand{\bia}[1]{\bar{#1}_i^\alpha}
\newcommand{\bbia}[1]{\bar{\bar{#1}}_i^\alpha}
\newcommand{\biha}[1]{\bar{#1}_i^\ha}
\newcommand{\bbiha}[1]{\bar{\bar{#1}}_i^\ha}

\newcommand{\Id}{{\sf Id}}

\renewcommand{\P}{{P_i}}

\newcommand{\bllangle}{{\bigl\langle \hspace{-0.9mm} \bigr\langle}}
\newcommand{\brrangle}{{\bigl\rangle \hspace{-0.9mm} \bigr\rangle}}
\newcommand{\llangle}{{\langle \! \langle}}
\newcommand{\rrangle}{{\rangle \! \rangle}}
\newcommand{\ha}{{\hat{\alpha}}}

\newtheoremstyle{assumption}{6pt}{6pt}{\rm}{}{\sffamily}{ }{ }{}
\theoremstyle{assumption}
\newtheorem{assumption}{\sc Assumption}

\newtheoremstyle{definition}{6pt}{6pt}{\rm}{}{\sffamily}{ }{ }{}
\theoremstyle{definition}
\newtheorem{definition}{\sc Definition}

\newtheoremstyle{lemma}{6pt}{6pt}{\rm}{}{\sffamily}{ }{ }{}
\theoremstyle{lemma}
\newtheorem{lemma}{\sc Lemma}

\newtheoremstyle{example}{6pt}{6pt}{\rm}{}{\sffamily}{ }{ }{}
\theoremstyle{example}
\newtheorem{example}{\sc Example}

\newtheoremstyle{theorem}{6pt}{6pt}{\rm}{}{\sffamily}{ }{ }{}
\theoremstyle{theorem}
\newtheorem{theorem}{\sc Theorem}

\newtheoremstyle{remark}{6pt}{6pt}{\rm}{}{\sffamily}{ }{ }{}
\theoremstyle{remark}
\newtheorem{remark}{\sc Remark}

%%%%%%%%%%% Overrides
\renewcommand{\a}{\alpha}

%%%%%%%%%%%% Common symbols

\newcommand{\abs}[1]{\left|#1\right|}

\newcommand{\tends}{\rightarrow}

\newcommand{\norm}[1]{\left\|#1\right\|}

\newcommand{\half}{\frac{1}{2}}

%%%%%%%%%%%% Math Operators

%%%%%%%%%%%%%%%%%
%Differential Operators

\newcommand{\pair}[2]{\langle #1,#2 \rangle}
%Usage: write \pd[x]{y}{z}

%%%%%%%%%%%%% Common sets I

\newcommand{\calT}{\mathcal{T}}

\newcommand{\tia}[1]{\tilde{#1}^{\a}}
\newcommand{\ttia}[1]{\tilde{\tilde{#1}}^{\a}}
\newcommand{\tha}[1]{\tilde{#1}^{\ha}}
\newcommand{\ttha}[1]{\tilde{\tilde{#1}}^{\ha}}

\title{ $\boldsymbol{L^2(H^1_\gamma)}$ \bf Finite Element Convergence for Degenerate Isotropic Hamilton--Jacobi--Bellman Equations\\[3mm]}
\author{Max Jensen}

\begin{document}

\maketitle

\bigskip

\begin{abstract}
{In this paper we study the convergence of monotone $P1$ finite element methods for fully nonlinear Hamilton--Jacobi--Bellman equations with degenerate, isotropic diffusions. The main result is strong convergence of the numerical solutions in a weighted Sobolev space $L^2(H^1_\gamma(\O))$ to the viscosity solution without assuming uniform parabolicity of the HJB operator.}
{finite element methods, degenerate partial differential equations, Hamilton--Jacobi--Bellman equations, viscosity solutions.}
\end{abstract}

%\begin{AMS}
%65M60, 65M12, 35D40, 35K65, 35K55
%\end{AMS}

\bigskip

%%%%%%%%%%%%%%%%%%%%%%%%%%%%%%%%%%%%%%%%%%%%%%%%%%%%%%%%%%%%%%%%%%%%%%%%
\section{Introduction} \label{sec:introduction}
%%%%%%%%%%%%%%%%%%%%%%%%%%%%%%%%%%%%%%%%%%%%%%%%%%%%%%%%%%%%%%%%%%%%%%%%

Hamilton-Jacobi-Bellman (HJB) equations characterise the value functions of optimal control problems. For a wide range of control problems one can compute optimal control policies from the {\em partial derivatives} of the value function.

An important tool in the analysis of HJB equations and their numerical approximations is the concept of viscosity solutions. Its definition is based on sign information on function values of candidate solutions, leading typically to proofs of $L^\infty$ convergence of numerical methods, cf. \cite{BS91}. It is more difficult to prove convergence in other norms if solely viscosity solutions are used.

The use of weak solution, familiar from semilinear differential equations, in the context of Hamilton-Jacobi-Bellman equations is delicate because often uniqueness cannot be ensured. However, we believe that combining the notions of viscosity and weak solution is attractive for numerical analysis: the former to deal with uniqueness and the later to study convergence of partial derivatives.

In \cite{JS13} the uniform convergence of $P1$ finite element approximations to the viscosity solutions of isotropic, degenerate parabolic HJB equations was shown. In addition $L^2(H^1)$ convergence was demonstrated, under the assumption that the HJB equation is uniformly parabolic. In this paper we remove the assumption of uniform parabolicity and verify that strong convergence in weighted $L^2(H^1_\gamma)$ spaces can be maintained, see Theorem \ref{thm:main} below. Also the condition that the $d$-dimensional Lebesgue measure of the boundary of the zero level set of the value function has to vanish is not needed anymore.

Our approach uses coercivity properties of the HJB operator. An alternative technique to control derivative terms is proposed in \cite{SS}, where HJB equations satisfying Cordes conditions are discretised. A general review of the recent advances in the discretesation of fully nonlinear equations is given in \cite{FGN13}.

In Section \ref{sec:problemstatement} we introduce the Bellman equation, while in Section \ref{sec:scheme} the numerical method is defined. Section \ref{sec:review} is concerned with uniform convergence to the value function. In Section \ref{sec:projection} a nonlinear projection operator is constructed and analysed, which preserves positivity and boundary conditions of the viscosity solution. Section \ref{sec:coer} is concerned with the coercivity of the continuous and discrete Bellman operators. The main result of strong convergence in a weighted Sobolev space is proved in Section \ref{sec:gradient}. Finally, in Section \ref{sec:art_diff} assumptions of the prior analysis are translated into concrete parameter values for the method of artificial viscosity.

%%%%%%%%%%%%%%%%%%%%%%%%%%%%%%%%%%%%%%%%%%%%%%%%%%%%%%%%%%%%%%%%%%%%%%%%
\section{Problem statement} \label{sec:problemstatement}
%%%%%%%%%%%%%%%%%%%%%%%%%%%%%%%%%%%%%%%%%%%%%%%%%%%%%%%%%%%%%%%%%%%%%%%%

Let $\O$ be a bounded Lipschitz domain in $\R^d$, $d \ge 2$. Let $A$ be a compact metric space and let
\[
A \to C(\oO) \times C(\oO, \R^d) \times C(\oO) \times C(\oO),\; \alpha \mapsto (a^\alpha,  b^\alpha, c^\alpha, f^\alpha)
\]
be continuous, such that the families of functions $\{a^{\a}\}_{\a\in A}$, $\{b^{\a} \}_{\a\in A}$, $\{c^{\a}\}_{\a\in A}$ and $\{f^\alpha\}_{\a\in A}$ are equi-continuous. Consider the bounded linear operators 
\[
L^\alpha : \; H^2(\O) \to L^2(\O), \; w \mapsto - a^\alpha \, \Delta w + b^\alpha \cdot \nabla w + c^\alpha \, w, \qquad \a\in A.
\]
We assume that $a^\alpha \ge 0$, i.e.~that all $L^\alpha$ are of degenerate elliptic. Furthermore, suppose that pointwise $f^\alpha \ge 0$. Then
\begin{align} \label{Lbounds}
\sup_{\alpha \in A} \| \, (a^\alpha,  b^\alpha, c^\alpha, f^\alpha) \,
\|_{C(\oO) \times C(\oO, \R^d) \times C(\oO) \times C(\oO)} < \infty,
\end{align}
and also $\sup_{\alpha \in A} \| L^\alpha \|_{H^2(\O) \to L^2(\O)} < \infty.$ Let the final-time data $ v_T \in C(\oO) $ be non-negative, that is $ v_T \geq 0 $ on $\oO$, and let $v_T$ satisfy homogeneous boundary conditions on $\pO$. For smooth $w$, let 
\[
H w := \sup_\alpha (L^\alpha w - f^\alpha),
\]
where the supremum is applied pointwise. The HJB equation considered is
\begin{subequations}\label{eq:HJB_ibvp}
\begin{alignat}{2}
-\p_t v + H v &=0 	&	&\qquad\text{in } \Omega_T := (0,T)\times\O,\label{eq:HJB_ibvp_pde}\\
v&=0 		  	&	&\qquad\text{on }(0,T)\times\pO,\label{eq:HJB_ibvp_lateral}\\
v&=v_T		  	&	&\qquad\text{on }\{T\}\times\oO.\label{eq:HJB_ibvp_final}
\end{alignat}
\end{subequations}

\begin{definition}[\cite{BP87,FS06}]\label{def:viscosity_sol}
An upper semi-continuous (lower semi-continuous) function $v : [0,T] \times \oO \to \R $ is a viscosity subsolution (supersolution) of
\begin{align}\label{HJB}
- \p_t v + H v = 0 \quad\text{on }\O_T,
\end{align}
if for any $w\in C^{\infty}(\R \times\R^d)$ such that $v-w$ has a strict local maximum (minimum) at $(t,x)\in(0,T)\times\O$ with $v(t,x)=w(t,x)$, gives $-\p_t w(t,x)+Hw(t,x) \leq 0$, (greater than or equal to $0$). If $v\in C([0,T]\times\oO)$ is both a viscosity subsolution and a supersolution of equation \eqref{HJB}, then $v$ is called a viscosity solution.
\end{definition}

The viscosity solution of \eqref{eq:HJB_ibvp} is understood to be a viscosity solution of the PDE \eqref{eq:HJB_ibvp_pde}, in the sense of Definition~\ref{def:viscosity_sol}, that satisfies pointwise the boundary conditions \eqref{eq:HJB_ibvp_lateral} and \eqref{eq:HJB_ibvp_final}. Owing to the definition of viscosity solutions, $v$ is a continuous function.

%%%%%%%%%%%%%%%%%%%%%%%%%%%%%%%%%%%%%%%%%%%%%%%%%%%%%%%%%%%%%%%%%%%%%%%%
\section{The numerical scheme} \label{sec:scheme}
%%%%%%%%%%%%%%%%%%%%%%%%%%%%%%%%%%%%%%%%%%%%%%%%%%%%%%%%%%%%%%%%%%%%%%%%

Let $V_i$, $i\in\N$, be a sequence of piecewise linear, conforming, shape-regular finite element spaces with nodes $\il y$. Here $\ell$ is the index ranging over the nodes of the finite element mesh $\m T_i$. Let $V_i^0\subset V_i$ be the subspace of functions which satisfy homogeneous Dirichlet conditions on $\pO$. It is convenient to assume that $\il y \in \O$ for $\ell \leq N_i := \dim V_i^0$; i.e.~the index $\ell$ first ranges over internal nodes and then over boundary nodes. The associated hat functions are denoted $\il \phi$; that is $\il \phi \in V_i$ and $\il \phi(y_i^l) = 1$ if $l = \ell$, otherwise $\il \phi(y_i^l) = 0$. Set $\hil \phi := \il \phi / \| \il \phi \|_{L^1(\O)}$. Thus, the $\il \phi$ are normalised in the $L^\infty$ norm whilst the $\hil \phi$ are normalised in the $L^1$ norm. The mesh size, i.e. the largest diameter of an element, is denoted $\Delta x_i$. It is assumed that $\Delta x_i \to 0$ as $i \to \infty$.

Let $h_i$ be the (uniform) time step size used in conjunction with $V_i$, with $T / h_i \in \N$, and let $\ik s$ be the $k$th time step at the refinement level~$i$. It is assumed that $h_i \to 0$ as $i \to \infty$. The set of time steps is $S_i := \bigl\{ \ik s : k = 0, \ldots, {\textstyle T / h_i} \bigr\}$. Let the $\ell$th entry of $d_i w( \ik s, \cdot)$ be
\[
(d_i w( \ik s, \cdot))_\ell = \frac{w( \iko s, \il y) - w(\ik s, \il y)}{h_i}.
\]
For each $\alpha$ and $i$, we introduce operators $\ia E$ and $\ia I$ to break $L^\alpha$ into an explicit and implicit part: 
\begin{align*}
\ia E &: \; H^2(\O) \to L^2(\O), \; w \mapsto - \bia a \, \Delta w + \bia b \cdot \nabla w + \bia c \, w,\\
\ia I &: \; H^2(\O) \to L^2(\O), \; w \mapsto - \bbia a \, \Delta w + \bbia b \cdot \nabla w + \bbia c \, w,
\end{align*}
with continuous
\begin{align} \label{conta} \begin{array}{ll}
A \to C(\oO) \times C(\oO, \R^d) \times C(\oO),&\; \alpha \mapsto (\bia a, \bia b, \bia c),\\[1mm]
A \to C(\oO) \times C(\oO, \R^d) \times C(\oO),&\; \alpha \mapsto (\bbia a, \bbia b, \bbia c).
\end{array} \end{align}
It is required that $\bia c$ and $\bbia c$ are non-negative and that there is $C \in \R$ such that
\begin{align}\label{react}
 \| \bia c \|_{L^\infty} + \| \bbia c \|_{L^\infty} \leq C , \qquad \forall\,i\in\N,\;\forall\,\a\in A.
\end{align}
Also, find for each $i$ a non-negative $\ia f$ which approximates $f^\alpha$: $\ia f \approx f^\alpha$. The conceptual statements $L^\alpha \approx \ia E + \ia I$ and $f^\alpha \approx \ia f$ are made precise as follows:

\begin{assumption} \label{consistency}
For all sequences of nodes $(\il y)_{i \in \N}$, where in general $\ell = \ell(i)$ depends on $i$:
\begin{align*}  
\lim_{i \to \infty} \sup_{\a\in A} \bigl( \bigl\| a^\alpha - \bigl( \bia a(\il y) + \bbia a(\il y) \bigr) \bigr\|_{L^\infty({\rm supp} \, \hil \phi)} + \bigl\| b^\alpha - \bigl( \bia b + \bbia b \bigr) \bigr\|_{L^\infty(\O,\R^d)} & \\
 +  \bigl\| c^\alpha - \bigl( \bia c + \bbia c \bigr) \bigr\|_{L^\infty(\O)} + \bigl\| f^\alpha - \ia f \bigr\|_{L^\infty(\O)} & \bigr) = 0.
\end{align*}

\end{assumption}

Let $\pair{\cdot}{\cdot}$ denote the standard inner product for both of the spaces $L^2(\O)$ and $L^2(\O,\R^d)$, the two cases being distinguished by the arguments of the inner product. Consider the following discretisation of $\ia E$ and $\ia I$ by operators $\sia E$ and $\sia I$ that map $H^1(\O)$ to $\R^{N_i}$: for $w \in H^1(\O)$, $\ell \in \{ 1, \dots, N_i = \dim V_i^0 \} $,
\begin{subequations}\label{eq:discreteop}
\begin{align}
(\sia E w)_\ell & := \bia  a(\il y) \langle \nabla w, \nabla \hil \phi \rangle + \langle \bia  b \cdot \nabla w + \bia  c \, w, \hil \phi \rangle,\\
(\sia I w)_{\ell} & := \bbia a(\il y) \langle \nabla w, \nabla \hil \phi \rangle + \langle \bbia b \cdot \nabla w + \bbia c w, \hil \phi \rangle,\\
(\sia F)_\ell & := \langle \ia f, \hil \phi \rangle.
\end{align}
\end{subequations}
Throughout this work, we identify $\sia E $ and $\sia I $, when restricted to $V_i$, with their matrix representations with respect to the nodal basis $\left\{\il \phi\right\}_{\ell}$. Under this basis, the nodal evaluation operator $w\mapsto w(\il y)$ corresponds to the identity matrix $\Id$.

We now define the numerical scheme for \eqref{eq:HJB_ibvp}. Obtain the numerical solution $v_i(T, \cdot) \in V_i^0$ by nodal interpolation of $v_T$. Then, for each $k \in \{0,\dots,T/h_i-1\}$, the numerical solution $v_i(\ik s, \cdot) \in V_i^0$ is defined inductively by
\begin{align} \label{num}
- d_i v_i(\ik s,\cdot) + \sup_{\alpha\in A} \bigl( \sia E v_i(\iko s,\cdot) + \sia I v_i(\ik s,\cdot) - \sia F \bigr) = 0.
\end{align}

%%%%%%%%%%%%%%%%%%%%%%%%%%%%%%%%%%%%%%%%%%%%%%%%%%%%%%%%%%%%%%%%%%%%%%%%
\section{Review of monotonicity and uniform convergence} \label{sec:review}
%%%%%%%%%%%%%%%%%%%%%%%%%%%%%%%%%%%%%%%%%%%%%%%%%%%%%%%%%%%%%%%%%%%%%%%%

The proof of gradient convergence in weighted spaces, given in Section \ref{sec:gradient}, is based on the non-negativity of numerical solutions and uniform convergence to the viscosity solution. 

\begin{assumption} \label{thm:monotonicity}
For each $\alpha\in A$, assume that $\sia E$, restricted to $V_i$, has non-positive off-diagonal entries. Let $h_i$ be small enough so that $h_i \sia E - \Id$ is monotone for every $\a$, i.e.~so that all entries of all $h_i \sia E - \Id$ are non-positive. For each $\a$, suppose that for all $v \in V_i$ such that $v$ has a non-positive local minimum at the internal node $\il y$, we have $( \sia I v )_{\ell} \leq 0$.
\end{assumption}

It was shown in \cite[Theorem 3.1]{JS13} that Assumption \ref{thm:monotonicity} implies the existence of a unique numerical solution $v_i$ of \eqref{num} and that $v_i$ is non-negative.

Let $t = \theta s_i^k + (1 - \theta) s_i^{k+1} \in [s_i^k, s_i^{k+1}]$ lie between two time steps, $\theta \in [0,1]$. Then we interpret $v_i(t, \cdot)$ as the linear interpolant between $v_i(s_i^k, \cdot)$ and $v_i(s_i^{k+1}, \cdot)$:
\begin{align} \label{eq:affine}
v_i(t, \cdot) = \theta v_i(s_i^k, \cdot) + (1 - \theta) v_i(s_i^{k+1}, \cdot).
\end{align}
\begin{assumption} \label{thm:uniform}
The Hamilton--Jacobi--Bellman problem \eqref{eq:HJB_ibvp} has a unique viscosity solution $v$ and
\begin{align} \label{conv}
\lim_{i \to \infty} \| v_i - v \|_{L^\infty(\O_T)} = 0.
\end{align}
\end{assumption}

\noindent In \cite{JS13} it was demonstrated that Assumption \ref{thm:uniform}, that is uniform convergence, holds if the following conditions are satisfied:

\begin{enumerate}
\item {\em Orthogonal projection:} \label{orthproj} Suppose there exist linear mappings $\P : \; C([0,T], H^1(\O)) \to [0,T] \times V_i$ which satisfy for all $\hil \phi \in V_i^0$
\begin{equation}\label{eq:ellprojdef}
\langle \nabla \P w(t,\cdot), \nabla \hil \phi \rangle = \langle \nabla w(t,\cdot), \nabla \hil \phi \rangle, \quad \forall t \in [0,T],
\end{equation}
and there is a constant $C\geq 0$ such that for every $ w \in C^{\infty}(\R^{d})$ and $i\in\N$,
\begin{equation}\label{ass:ellprojstab}
\norm{\P w}_{W^{1,\infty}(\O)} \leq C \norm{w}_{W^{1,\infty}(\O)}
\quad\text{and}\quad
\lim_{i\tends \infty} \norm{\P w - w}_{W^{1,\infty}(\O)}=0.
\end{equation}
\item {\em Boundary control:} For each $\a \in A$, we define $\ia v \colon S_i \to V_i^0$ to be the numerical solution of the linear evolution problem associated to the control $\a$ with homogeneous Dirichlet conditions: $\ia v(T,\cdot) = v_i(T,\cdot)$ is the interpolant of $v_T$, and for each $k \in \{0,\dots,T/h_i-1\}$,
\begin{equation} \label{numlinearsol} 
(h_i \sia I + \Id)\, \ia v(\ik s,\cdot) = - (h_i \sia E - \Id)\, \ia v(\iko s,\cdot) + h_i \sia F.
\end{equation}
Suppose that for each $(t,x)\in[0,T]\times\pO$
\[
\inf_{\a\in A} \sup_{(\ik s,\il y) \to (t,x)} \limsup_{i \to \infty} v_i^{\a}(\ik s,\il y) = 0,
\]
where the supremum is taken over the set of all sequences of nodes which converge to $(t,x)$. 
\item {\em Comparison:} Let $\overline{v}$ be a lower semi-continuous supersolution with $\overline{v}(T,\cdot) = v_T$ and $\overline{v}|_{[0,T] \times \pO} = 0$. Similarly, let $\underline{v}$ be an upper semi-continuous subsolution with $\underline{v}|_{[0,T] \times \pO} = 0$ and $\underline{v}(T,\cdot) = v_T$. Then $\underline{v} \le \overline{v}$.
\end{enumerate}

%%%%%%%%%%%%%%%%%%%%%%%%%%%%%%%%%%%%%%%%%%%%%%%%%%%%%%%%%%%%%%%%%%%%%%%%
\section{Projection into the approximation space} \label{sec:projection}
%%%%%%%%%%%%%%%%%%%%%%%%%%%%%%%%%%%%%%%%%%%%%%%%%%%%%%%%%%%%%%%%%%%%%%%%

For shorthand, let $W = W^{1,d+1+\eps}(\O_T) \cap L^2((0,T), H^1_0(\O)) \subset C(\overline{\O_T})$ with $\eps > 0$. We also use the discrete spaces 
\[
W_i := \{ v \in C([0,T], V_i^0) : v|_{[\ik s, \iko s] \times \O} \text{ is affine in time} \},
\]
which means that functions in $W_i$ have the form of \eqref{eq:affine} between two time-steps. Observe that $W_i \subset W$ for all $i \in \N$.

We introduce the cut-off operation 
\[
C_i : \; W \to W, \; w \mapsto \max \{ w - \| v - v_i \|_{L^\infty(\O_T)} , 0 \}
\]
and denote the nodal interpolant on $[0,T] \times \oO$ by $I_i$. Finally we define $Q_i = I_i \circ C_i$. Thus $Q_i$ is a mapping of the type $W \to W_i$. Observe that $Q_i v \in W_i$ satisfies homogeneous boundary conditions. Furthermore, from $C_i v \le v_i$ and the monotonicity of the nodal interpolation operator it follows that $Q_i v \le v_i$. The stability of the max operation and $I_i$ gives, cf. \cite[Corollary 1.110]{ErnGuermond} with $\ell = 0$ and $p = d+1+\eps$,
\begin{align} \label{eq:Qstab} \begin{array}{ll}
\| Q_i v \|_W & \lesssim \| v \|_W,\\[1mm]
\| Q_i v(T,\cdot) \|_{L^\infty(\O)} & \lesssim \| v(T,\cdot) \|_{L^\infty(\O)}.
\end{array} \end{align}

\begin{lemma} \label{projectionprop}
Suppose that $v \in W$. The sequence $Q_i v$ consists of non-negative functions, satisfying homogeneous Dirichlet boundary conditions and $Q_i v \le v_i$. Moreover, the sequence converges strongly in $W$ and $L^\infty(\O_T)$ to $v$. 
\end{lemma}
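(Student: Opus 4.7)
I plan to prove the three pointwise properties directly from the construction, then obtain the $W$-convergence in two stages: first $C_i v \to v$, then $I_i C_i v \to C_i v$. Set $\eps_i := \|v - v_i\|_{L^\infty(\O_T)}$, so $\eps_i \to 0$ by Assumption~\ref{thm:uniform}; non-negativity of each $v_i$ (Section~\ref{sec:review}) together with uniform convergence gives $v \ge 0$ on $\overline{\O_T}$, and $v$ vanishes on $[0,T] \times \pO$ as the continuous representative of an element of $L^2((0,T),H^1_0(\O))$ matching the pointwise boundary data. Non-negativity of $Q_i v$ then follows since $C_i v \ge 0$ and $I_i$ (built from non-negative hat functions) preserves non-negativity. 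On $[0,T] \times \pO$, $v = 0$ gives $C_i v = \max(-\eps_i, 0) = 0$, so $I_i C_i v$ vanishes at boundary nodes and each $Q_i v(t,\cdot)$ lies in $V_i^0$. For $Q_i v \le v_i$: from $v - \eps_i \le v_i$ and $0 \le v_i$ we obtain $C_i v \le v_i$ pointwise, and since $v_i \in W_i$ yields $I_i v_i = v_i$, monotonicity of $I_i$ finishes the claim.

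\textbf{Convergence $C_i v \to v$ in $W$.} The elementary bound $|C_i v - v| = \min(\eps_i, v) \le \eps_i$ supplies $L^\infty$ convergence of $C_i v$. Stampacchia's chain rule applied to $w \mapsto (w - \eps_i)^+$ gives $\nabla_{t,x} C_i v = \chi_{\{v > \eps_i\}} \nabla_{t,x} v$, hence
\[
\nabla_{t,x}(C_i v - v) \;=\; -\,\chi_{\{v \le \eps_i\}} \, \nabla_{t,x} v.
\]
At any point with $v > 0$, $\chi_{\{v \le \eps_i\}}$ eventually vanishes; on $\{v = 0\}$, a second application of Stampacchia yields $\nabla_{t,x} v = 0$ almost everywhere. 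The integrand therefore tends to zero pointwise a.e.\ and is dominated by $|\nabla_{t,x} v| \in L^{d+1+\eps}(\O_T)$, so dominated convergence delivers $\|C_i v - v\|_W \to 0$.

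\textbf{Interpolation and $L^\infty$ convergence.} Standard approximation on shape-regular tensor-product meshes, combined with density of smooth functions in $W$ and the $W$-stability of $I_i$, gives $\|I_i w - w\|_W \to 0$ for every $w \in W$. A triangle-inequality split
\[
\|Q_i v - v\|_W \;\le\; \|I_i (C_i v - v)\|_W + \|I_i v - v\|_W,
\]
combined with the stability of $I_i$ applied to the previous step, yields $\|Q_i v - v\|_W \to 0$. Convergence in $L^\infty(\O_T)$ then follows either from the Sobolev embedding $W^{1,d+1+\eps}(\O_T) \hookrightarrow C(\overline{\O_T})$, or directly from $\|Q_i v - v\|_{L^\infty} \le \|I_i C_i v - C_i v\|_{L^\infty} + \eps_i$ together with the uniform modulus of continuity of $v$.

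The main obstacle is the middle stage: the identification that $\nabla_{t,x} v = 0$ almost everywhere on the zero level set of $v$. This Stampacchia-type fact requires no hypothesis on the geometry of $\{v = 0\}$ and is precisely what allows us to dispense with the previously required condition on the $d$-dimensional measure of $\p\{v = 0\}$ noted in the introduction.
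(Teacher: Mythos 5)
Your proposal is correct and follows the same two-stage decomposition as the paper (first $C_i v \to v$ in $W$, then handle the nodal interpolation error $I_i w - w$), but it is worth flagging two differences in presentation. On the cut-off stage your argument is \emph{more} explicit than the paper's: you invoke Stampacchia's chain rule twice, once to get $\nabla(C_i v - v) = -\chi_{\{v \le \eps_i\}} \nabla v$ and a second time to deduce $\nabla v = 0$ a.e.\ on $\{v = 0\}$. The paper compresses both uses into the single assertion $\nabla(v - C_i v) = 0$ on $\O_T \setminus \Gamma_i$ with $\Gamma_i = \{ v \in (0,\eps_i) \}$, and then concludes from $\bigcap_i \Gamma_i = \emptyset$; the two arguments are equivalent, but your version makes visible precisely the fact — vanishing of $\nabla v$ a.e.\ on the zero level set — that explains why the earlier measure-theoretic assumption on $\partial\{v=0\}$ from \cite{JS13} can be dropped, which you correctly identify. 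On the interpolation stage, by contrast, you are considerably \emph{less} explicit: you appeal to ``standard approximation'' together with density, whereas the paper devotes the larger half of the proof (Step 2) to an elementary verification, passing through directional derivatives along edges, the mean value theorem, and a shape-regularity bound on the row-sum norm of the coordinate-transformation matrix $A_\tau$ to control $\|\partial_k I_i v_\delta - \partial_k v_\delta\|_{L^\infty}$. Your citation is defensible since the required result — nodal interpolation of $C^\infty$ functions converges in $W^{1,\infty}$ on shape-regular meshes — is genuinely classical and the $W$-stability of $I_i$ (for $p > d+1$) plus density does close the argument; but in a paper where this stability is itself flagged by a precise reference, one should at least cite a concrete interpolation estimate rather than a bare ``standard.'' The remaining points (non-negativity of $Q_i v$, homogeneous boundary values, $Q_i v \le v_i$ via monotonicity of $I_i$ and $I_i v_i = v_i$, and the $L^\infty$ convergence via the embedding $W^{1,d+1+\eps}(\O_T) \hookrightarrow C(\overline{\O_T})$) all match the paper.
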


\begin{proof} The convergence of $Q_i v$ to $v$ in $W$ remains. We break the proof into two steps by means of the triangle inequality: 
\[
\| Q_i v - v \|_W \le \| Q_i v - I_i v \|_W + \| I_i v - v \|_W.
\]
{\em Step 1.} Observe that
\[
v - C_i v = \min \{ \| v - v_i \|_{L^\infty(\O_T)} , v \}.
\]
Thus $\| v - C_i v \|_{L^\infty(\O_T)} \to 0$ as $i \to \infty$. Moreover,
\[
\| \nabla (v - C_i v) \|_{L^{d+1+\eps}(\O_T)} = \| \nabla v \|_{L^{d+1+\eps}(\Gamma_i)},
\]
where $\Gamma_i = \{ x \in \O_T : v(x) \in (0,\| v - v_i \|_{L^\infty(\O_T)}) \}$ as we have $\nabla (v - C_i v) = 0$ in $\O_T \setminus \Gamma_i$. Because $\bigcap_i \Gamma_i = \emptyset$ it follows that $\| v - C_i v \|_W \to 0$ as $i \to \infty$. Hence, owing to \eqref{eq:Qstab}, $\| Q_i v - I_i v \|_W = \| I_i (C_i v - v) \|_W$ vanishes as $i \to \infty$.

{\em Step 2.} Let $\delta > 0$. Recalling the density of smooth functions in $W$ there is a $v_\delta \in C^\infty(\overline{\O_T})$ such that $\| v_\delta - v \|_W \le \delta$. Owing to the uniform continuity of the partial derivatives of $v_\delta$ on the compact set $\overline{\Omega_T}$ there is a $j \in \N$ such that 
\[
\bigl( \max_{x \in \tau} \partial_k v_\delta(t, x) \bigr) - \bigl( \min_{x \in \tau} \partial_k v_\delta(t, x) \bigr) \le \delta
\]
for all $i \ge j$, $\tau \in \m T_i$, $t \in [0,T]$, and $k \in \{1, \ldots, d\}$. Let $\mu = (\mu_1, \ldots, \mu_d) \in \R^d$ denote vectors with $\| \mu \|_2 = 1$. Then there is a maximiser $x' \in \tau$ and a minimiser $x'' \in \tau$ such that
\begin{align*}
\bigl( \max_{x \in \tau} \partial_\mu v_\delta(t,x) \bigr) - \bigl( \min_{x \in \tau} \partial_\mu v_\delta(t,x) \bigr) & = \partial_\mu v_\delta(t,x') - \partial_\mu v_\delta(t,x'')\\
& =  \sum_k \mu_k \bigl( \partial_k v_\delta(t,x') - \partial_k v_\delta(t,x'') \bigr) \le \sum_k |\mu_k| \delta.
\end{align*}
From the mean value theorem it follows that directional derivatives $\partial_\mu I_i v_\delta$ in the direction of an edge of $\tau$ are attained on that edge by $\partial_\mu v_\delta$ and thus $\| \partial_\mu I_i v_\delta - \partial_\mu v_\delta \|_{L^\infty(\tau)} \le \| \mu \|_1 \delta \le \sqrt{d} \delta$. For each $\tau$ a basis $\mu^1, \ldots, \mu^d$ of $\R^d$ can be formed of unit vectors parallel to edges. Let $A_\tau$ be the matrix expressing the coordinate transformation from $\mu^1, \ldots, \mu^d$ to the canonical basis. Because of shape-regularity there is a constant $C$ which bounds the row-sum norm: $\| A_\tau \|_{\rm row} \le C$ for all $\tau \in \m T_i$ and $i \in \N$. Thus we have $\| \partial_k I_i v_\delta - \partial_k v_\delta \|_{L^\infty(\O_T)} \le C \sqrt{d} \delta$ for $k \in \{1, \ldots, d\}$. We conclude that for $i$ sufficiently large
\[
\| I_i v - v \|_W \le \| I_i (v - v_\delta) \|_W + \| I_i v_\delta - v_\delta \|_W + \| v_\delta - v \|_W \lesssim \delta,
\]
using once again \eqref{eq:Qstab}. \end{proof}

We shall also require a super-approximation result on nodal interpolation in weighted Sobolev spaces. We cite Theorem 2.1 in \cite{Demlow11}, with the notation adapted to this paper.

\begin{lemma} \label{superapprox}
Let $T$ be a mesh element with diameter $\Delta x \le 1$ and $I$ be the nodal interpolation operator of $T$. Given $g \in W^{2,\infty}(T)$ there exists a value $K$, depending on $\| g \|_{W^{2,\infty}(T)}$ and the shape regularity of $T$ and the dimension $d$, such that for {\em affine} functions $w$
\[
\| g^2 w - I (g^2 w) \|_{H^1(T)} \le K \Delta x \bigl( \| \nabla (g w) \|_{L^2(T)} + \| w \|_{L^2(T)} \bigr).
\]
\end{lemma}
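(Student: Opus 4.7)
The plan is to reduce the estimate to a classical Bramble--Hilbert interpolation bound by exploiting that $w$ is affine. First, I would fix a vertex $x_T \in T$ and decompose
\[
g^2 w = g(x_T)^2\, w + \bigl(g^2 - g(x_T)^2\bigr)\, w.
\]
Since $w$ is affine, the first summand is affine and hence reproduced exactly by $I$. Setting $u := (g^2 - g(x_T)^2)\, w$, the task reduces to controlling $\|u - I u\|_{H^1(T)}$.

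Next, I would apply the standard $P_1$ nodal interpolation estimate $\|u - Iu\|_{H^1(T)} \lesssim \Delta x\, |u|_{H^2(T)}$, whose constant depends only on shape regularity and $d$. Since $D^2 w = 0$, the Leibniz rule gives $D^2 u = D^2(g^2)\, w + 2\,\nabla(g^2) \otimes \nabla w$. The first contribution is bounded pointwise by $\|g\|_{W^{2,\infty}(T)}^2 |w|$, yielding an $L^2$ bound of $\|g\|_{W^{2,\infty}(T)}^2 \|w\|_{L^2(T)}$. For the second I would use $\nabla(g^2) = 2 g\,\nabla g$ together with the identity $g\,\nabla w = \nabla(gw) - w\,\nabla g$ to rewrite the weighted gradient in the target form, which converts $\|g\,\nabla w\|_{L^2(T)}$ into $\|\nabla(gw)\|_{L^2(T)} + \|\nabla g\|_{L^\infty(T)}\|w\|_{L^2(T)}$.

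Collecting these bounds produces a constant $K$ depending only on $\|g\|_{W^{2,\infty}(T)}$, the shape regularity of $T$, and the dimension $d$, as asserted. The argument is essentially the classical super-approximation trick of Schatz--Wahlbin, so no step is deep in itself; the main obstacle is bookkeeping. One must verify that the Bramble--Hilbert constant scales correctly (this is where $\Delta x \le 1$ is used, to absorb lower-order contributions without producing negative powers of $\Delta x$), and one must be careful that the ``extra'' $\|w\|_{L^2(T)}$ term generated by the constant shift at $x_T$ is the same $\|w\|_{L^2(T)}$ that appears on the right-hand side, rather than some stronger norm that would invalidate the estimate when summed over the mesh.
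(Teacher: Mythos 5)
Your argument is correct, and it is the standard super-approximation proof. Note, however, that the paper does not prove this lemma at all: it is cited directly as Theorem~2.1 of Demlow, Guzm\'an and Schatz~\cite{Demlow11}, so there is no internal proof to compare against. Two small remarks on the write-up. First, the decomposition $g^2 w = g(x_T)^2 w + (g^2 - g(x_T)^2)w$ is harmless but superfluous: the Bramble--Hilbert bound $\|u - Iu\|_{H^1(T)} \lesssim \Delta x\, |u|_{H^2(T)}$ (valid for $\Delta x \le 1$, since the $L^2$ contribution carries an extra factor $\Delta x$) depends only on the $H^2$ seminorm, which is unchanged by subtracting the affine function $g(x_T)^2 w$; you may apply it to $g^2 w$ directly. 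Second, the chain of pointwise bounds should be stated in full to confirm the dependence of $K$: with $D^2 w = 0$ one has $D^2(g^2 w) = (2\nabla g \otimes \nabla g + 2 g D^2 g)\,w + 4 g\,\nabla g \otimes \nabla w$, and then $g \nabla w = \nabla(gw) - w\nabla g$ gives $|D^2(g^2 w)| \lesssim \|g\|_{W^{2,\infty}(T)}^2\bigl(|w| + |\nabla(gw)|\bigr)$, so that $|g^2 w|_{H^2(T)} \lesssim \|g\|_{W^{2,\infty}(T)}^2\bigl(\|w\|_{L^2(T)} + \|\nabla(gw)\|_{L^2(T)}\bigr)$; combining with the interpolation bound yields exactly the stated inequality with $K$ depending on $\|g\|_{W^{2,\infty}(T)}$, shape regularity and $d$, as claimed.
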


%%%%%%%%%%%%%%%%%%%%%%%%%%%%%%%%%%%%%%%%%%%%%%%%%%%%%%%%%%%%%%%%%%%%%%%%
\section{Coercivity properties of the Hamilton--Jacobi--Bellman operator and its discretisation} \label{sec:coer}
%%%%%%%%%%%%%%%%%%%%%%%%%%%%%%%%%%%%%%%%%%%%%%%%%%%%%%%%%%%%%%%%%%%%%%%%

Owing to the non-negativity of $v$, for each $\ha \in A$, we formally have for the exact solution
\begin{align} \label{varbound}
\partial_t v + \sup_\alpha (L^\alpha v - f^\alpha) = 0 \implies \, \partial_t v + L^\ha v \le f^\ha \implies \langle \partial_t v, v \rangle + \langle L^\ha v, v \rangle \le \langle f^\ha, v \rangle.
\end{align}
Furthermore, if there exists an $\ha \in A$ such that $a^{\ha}\in W^{2,\infty}(\O)$ and $c^{\ha}-\half (\nabla \cdot b^{\ha}+\Delta a^{\ha}) \geq 0$ we have for $w \in H_0^1(\O)$
\begin{align} \label{eq:ibp_pos}
\langle L^\ha w, w \rangle = \tst \langle a^\ha \nabla w, \nabla w \rangle + \langle (c^{\ha}-\half (\nabla \cdot b^{\ha}+\Delta a^{\ha})) w, w \rangle,
\end{align}
thus giving in combination with \eqref{varbound} control on $v$ in a Sobolev space weighted by $a^\ha$. We intend to build the gradient convergence proof upon a bound similar to \eqref{varbound}, with the differential operator replaced by its discretisation and $v$ by $v_i - Q_i v$.

Fix an arbitrary $\alpha \in A$. It is useful to view $\sia E$ and $\sia I$ as bilinear forms on $H^1(\O) \times V_i$. Functions $u \in V_i$ have the nodal representation
\[
u(y) = \sum_{\ell} u(\il y) \, \il \phi(y).
\]
To test with functions other than $\hil \phi$ we introduce the following bilinear form as a partially discrete operation: for $w \in H^1(\O)$ and $u \in V_i$
\[
\llangle \sia E w, u \rrangle := \sum_\ell u(\il y) \bigl( \bia  a(\il y) \langle \nabla w, \nabla \il \phi \rangle + \langle \bia  b \cdot \nabla w + \bia  c \, w, \il \phi \rangle \bigr).
\]
We use the corresponding interpretation for $\llangle \sia I w, u \rrangle$ and also
\begin{align*}
\llangle w, u \rrangle & = \llangle \Id \, w, u \rrangle = \sum_\ell w(\il y) \, u(\il y)\| \il \phi \|_{L^1(\O)},\\
\llangle \sia F, u \rrangle & = \sum_\ell u(\il y) \, \langle f_i^\alpha, \il \phi \rangle = \langle f_i^\alpha, u \rangle.
\end{align*}
Let $H^1_\gamma(\O)$ be the closure of $C_0^\infty(\O)$ in the norm
\[
\| v \|_\gamma^2 := \int_\O v^2 \gamma \, \d x + \int_\O |\nabla v|^2 \gamma \, \d x,
\]
where $\gamma: \O \to \R$ is a non-negative $L^\infty(\O)$ function. We write $H^1_\a(\O)$ as abbreviation of $H^1_{a^\a}(\O)$ and $H^1_i(\O)$ for $H^1_{\gamma_i}(\O)$ where $\gamma_i$ is a weight depending on $i \in \N$. 

We now formulate a discrete analogue of \eqref{varbound} with \eqref{eq:ibp_pos}: Consider that there exists an $\a \in A$ and weights $\gamma_i$ and a $C' > 0$ such that for all $i \in \N$
\begin{align}  \nonumber
& | w |_{L^2((0,T), H_i^1(\O))}^2 \\ \nonumber
\lesssim \, & \sum_{k = 0}^{\frac{T}{h_i} - 1} \Bigl( \bllangle \bigl(h_i \sia E - \Id \bigr) w(\iko s,\cdot) + \bigl(h_i \sia I + \Id \bigr) w(\ik s,\cdot), w(\ik s,\cdot) \brrangle \Bigr)\\
& \qquad + \tst \oh \llangle w(T,\cdot), w(T,\cdot) \rrangle + C' \| \nabla w(T,\cdot) \|_W^2 \label{eq:posdef} \\ \nonumber
\stackrel{(*)}{=} \, & \sum_{k = 0}^{\frac{T}{h_i}-1} \Bigl( h_i \bllangle \sia E w(\iko s,\cdot) + \sia I w(\ik s,\cdot), w(\ik s,\cdot) \brrangle \tst
 + \tst \oh \llangle w(\iko s,\cdot) - w(\ik s,\cdot), w(\iko s,\cdot) - w(\ik s,\cdot) \rrangle \Bigr)\\ & \qquad + \tst \oh \llangle w(0,\cdot), w(0,\cdot) \rrangle + C' \| \nabla w(T,\cdot) \|_W^2 \nonumber
\end{align}
for all $w \in W_i$ with $w \geq 0$ and $i \in \N$, where $(*)$ is a simple reformulation in terms of a telescope sum.

\begin{example} \label{ex:implicit}
Suppose that there is an $\alpha \in A$ such that  $\sqrt{a^{\a}}\in W^{2,\infty}(\O)$ and $c^{\a}-\half (\nabla \cdot b^{\a}+\Delta a^{\a}) \geq 0$. Choosing a fully implicit scheme with $\ia I = L^\a + 2 K \, \Delta x_i \, (\| \sqrt{a^\a} \|_{W^{1,\infty}(\O)} + 1) \, \Delta$ and $E^\a = 0$, the highest order term in $\llangle \sia I w, w \rrangle$ is at time $\ik s$:
\begin{align} \label{hot}
\sum_{\ell} w(\ik s, \il y) \bbia a(\ik s, \il y) \pair{\nabla w(\ik s, \cdot)}{\nabla \il \phi}=\pair{\nabla w(\ik s, \cdot)}{\nabla I_i (\bbia a(\ik s, \cdot) w(\ik s, \cdot))}
\end{align}
with $w \in W_i$ as well as weight and numerical diffusion coefficient 
\[
\gamma_i := \bbia a = a^\a + 2 K \, \Delta x_i \, (\| \sqrt{a^\a} \|_{W^{1,\infty}(\O)} + 1).
\]
According to Lemma \ref{superapprox}, for $i$ sufficiently large,
\begin{align} \nonumber
|\pair{\nabla w}{\nabla I_i (a^{\a} w)} - \pair{\nabla w}{\nabla a^{\a} w}| \le \; & \| \nabla w \|_{L^2(\O)} \cdot \| I_i (a^{\a} w) - a^{\a} w \|_{H^1(\O)}\\ 
\le \; & \textstyle K \; \Delta x_i \, \| \nabla w \|_{L^2(\O)} \bigl( \| \nabla (\sqrt{a^\a} w) \|_{L^2(\O)} + \| w \|_{L^2(\O)}\bigr) \label{coersuper}\\
\le \; & \textstyle K \; \Delta x_i \, (\| \sqrt{a^\a} \|_{W^{1,\infty}(\O)} + 1) \, \| w \|_{H^1(\O)}^2.\nonumber
\end{align}
Therefore, adding and subtracting $\pair{\nabla w}{\nabla a^{\a} w}$ from \eqref{hot}, the triangle inequality and \eqref{coersuper} give
\begin{align*}
\bllangle \sia I w, w \brrangle = \, & \bigl( \langle \nabla w, \nabla I_i (a^\a w) - \pair{\nabla w}{\nabla a^{\a} w} \bigr) + \pair{\nabla w}{\nabla a^{\a} w}\\
& + 2 K \, \Delta x_i \, (\| \sqrt{a^\a} \|_{W^{1,\infty}(\O)} + 1) \, \pair{\nabla w}{\nabla w} + \langle b^\a \cdot \nabla w + c^\a \, w, w \rangle\\
\ge \, & \langle L^\a w, w \rangle + K \, \Delta x_i \, (\| \sqrt{a^\a} \|_{W^{1,\infty}(\O)} + 1) \, \pair{\nabla w}{\nabla w} \ge \oh \abs{w}^2_{H^1_i(\O)},
\end{align*}
implying \eqref{eq:posdef} as the reformulation $(*)$ shows. We used here that $\bbia a - a^\a$ is constant and therefore unaffected by nodal interpolation. \hfill $\square$
\end{example}

Due to the definition of the numerical method and the non-negativity of the $v_i$, if \eqref{eq:posdef} holds then
\begin{align} \nonumber
| v_i |_{L^2(H^1_i)}^2 \lesssim \, & \sum_{k = 0}^{\frac{T}{h_i}-1} \Bigl( \bllangle \bigl(h_i \sia E - \Id \bigr) v_i(\iko s,\cdot) + \bigl(h_i \sia I + \Id \bigr) v_i(\ik s,\cdot), v_i(\ik s,\cdot) \brrangle \Bigr)\\
& \qquad + \tst \oh \llangle v_i(T,\cdot), v_i(T,\cdot) \rrangle + C' \| v_i(T,\cdot) \|_W^2  \nonumber\\
\le \, & \sum_{k = 0}^{\frac{T}{h_i}-1} \bllangle h_i \sia F, v_i(\ik s,\cdot) \brrangle + \tst \oh \llangle v_i(T,\cdot), v_i(T,\cdot) \rrangle + C' \| v_i(T,\cdot) \|_W^2  \nonumber \\
\lesssim & \, (T \, \| f_i^\alpha \|_{L^1(\O)} +1 ) \, \| v_i \|_{L^\infty([0,T] \times \O)} + C' \| v(T,\cdot) \|_W^2, \label{eq:sob_bound}
\end{align}
using stability of nodal interpolation to bound $\| v_i(T,\cdot) \|_W$. So \eqref{eq:posdef} guarantees stability in the $L^2(H^1_\gamma)$-norm.

%%%%%%%%%%%%%%%%%%%%%%%%%%%%%%%%%%%%%%%%%%%%%%%%%%%%%%%%%%%%%%%%%%%%%%%%
\section{Weighted gradient convergence} \label{sec:gradient}
%%%%%%%%%%%%%%%%%%%%%%%%%%%%%%%%%%%%%%%%%%%%%%%%%%%%%%%%%%%%%%%%%%%%%%%%

Let us assume for a moment that the approximations $Q_i v \in W_i$ satisfy
\begin{align}
\label{eq:projcond}
\sum_{k = 0}^{\frac{T}{h_i}-1} \bllangle \bigl(h_i \sia E - \Id \bigr) Q_i v(\iko s,\cdot) + \bigl(h_i \sia I + \Id \bigr) Q_i v(\ik s,\cdot), (v_i - Q_i v)(\ik s, \cdot) \brrangle \to 0.
\end{align}
We discuss the validity of \eqref{eq:projcond} in the proof of Theorem \ref{thm:main}. 

With $\xi^k = v_i(\ik s,\cdot) - Q_i v(\ik s,\cdot)$,
\begin{align} \nonumber
& | v_i - Q_i v |_{L^2(H_i^1)}^2\\ \nonumber
\stackrel{\eqref{eq:posdef}}{\lesssim} \, & \sum_{k = 0}^{\frac{T}{h_i}-1} \bllangle \bigl(h_i \sia E - \Id \bigr) \xi^{k+1} + \bigl(h_i \sia I + \Id \bigr) \xi^k, \xi^k \brrangle + \tst \oh \llangle \xi^{T/h_i}, \xi^{T/h_i} \rrangle + C' \| \xi^{T/h_i} \|_W^2\\ \nonumber
= \, & \sum_{k = 0}^{\frac{T}{h_i}-1} \bllangle \bigl(h_i \sia E - \Id \bigr) v_i(\iko s,\cdot) + \bigl(h_i \sia I + \Id \bigr) v_i(\ik s,\cdot), \xi^k \brrangle + \tst \oh \llangle \xi^{T/h_i}, \xi^{T/h_i} \rrangle\\ \nonumber
& - \sum_{k = 0}^{\frac{T}{h_i}-1} \bllangle \bigl(h_i \sia E - \Id \bigr) Q_i v(\iko s,\cdot) + \bigl(h_i \sia I + \Id \bigr) Q_i v(\ik s,\cdot), \xi^k \brrangle +  C' \| \xi^{T/h_i} \|_W^2\\ \nonumber
\stackrel{(*)}{\le} \, & \sum_{k = 0}^{\frac{T}{h_i}-1} \bllangle h_i \sia F, \xi^k \brrangle \; -
\sum_{k = 0}^{\frac{T}{h_i}-1} \bllangle \bigl(h_i \sia E - \Id \bigr) Q_i v(\iko s,\cdot) + \bigl(h_i \sia I + \Id \bigr) Q_i v(\ik s,\cdot), \xi^k \brrangle\\[2mm]
& + \tst \oh \llangle \xi^{T/h_i}, \xi^{T/h_i} \rrangle +  C' \| \xi^{T/h_i} \|_W^2, \label{eq:Hone_split}
\end{align}
using in $(*)$ the numerical scheme and that, due to the assumptions on the $Q_i$, the sign of $v_i - Q_i v$ is known. Since
\begin{align*}
\sum_{k = 0}^{\frac{T}{h_i}-1} \bllangle h_i \sia F, \xi^k \brrangle \le \, & \| f_i^\alpha \|_{L^2} \! \sum_{k = 0}^{\frac{T}{h_i}-1} h_i \bigl (\| v_i(\ik s, \cdot) - v(\ik s, \cdot) \|_{L^2} + \| v(\ik s, \cdot) - Q_i v(\ik s, \cdot) \|_{L^2} \bigr)\\
\lesssim \, & \| f_i^\alpha \|_{L^2(\O)} \, \bigl( \| v_i - v \|_{L^2(\O_T)} + \| v - Q_i v \|_{L^2(\O_T)} \bigr),
\end{align*}
the first term in \eqref{eq:Hone_split} vanishes as $i \to \infty$. The second term vanishes due to \eqref{eq:projcond}. For the two last terms recall Step 2 in the proof of Lemma \ref{projectionprop} and that $v_i$ is the interpolant of $v$ at time $T$. Hence
\[
| v_i - v |_{L^2([0,T], H_i^1(\O))} \to 0
\]
as $i \to \infty$.

\medskip

\begin{theorem} \label{thm:main}
Suppose the value function $v$ belongs to the space $W$ and there is an $\alpha \in A$ and weights $\gamma, \gamma_i \in L^\infty(\O)$, $i \in \N$, such that
\begin{enumerate}
\item $\| \sqrt{\bia a} \|_{W^{2,\infty}(\O)}$ and $\| \sqrt{\bbia a} \|_{W^{2,\infty}(\O)}$ are uniformly bounded in $i$,
\item there is a $C > 0$ such that $\bia a \le C \gamma_i$ and $\bbia a \le C \gamma_i$ for all $i \in \N$,
\item the coercivity condition \eqref{eq:posdef} is satisfied for large $i$ and $\gamma \lesssim \gamma_i$.
\end{enumerate}
Then the numerical solutions converge to the viscosity solution $v$ strongly in $L^2([0,T], H_\gamma^1(\O))$.
\end{theorem}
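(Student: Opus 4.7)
The plan is to first establish that the discrete discrepancy $|v_i - Q_i v|_{L^2((0,T), H^1_i(\O))}$ vanishes as $i \to \infty$, and then to deduce the desired $L^2(H_\gamma^1)$-convergence via the triangle inequality, using the domination $\gamma \lesssim \gamma_i$ from assumption 3 and the $W$-convergence $Q_i v \to v$ supplied by Lemma \ref{projectionprop}.

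The first step is to apply the coercivity inequality \eqref{eq:posdef} to $w = v_i - Q_i v$; this is legitimate because Lemma \ref{projectionprop} ensures $w \in W_i$ and $w \ge 0$. The manipulations leading to \eqref{eq:Hone_split} are already carried out in the excerpt, so the task reduces to driving to zero: (a) the source contribution $\sum_k h_i \bllangle \sia F, \xi^k \brrangle$, which the excerpt already bounds by $\|f_i^\alpha\|_{L^2}(\|v_i - v\|_{L^2(\O_T)} + \|v - Q_i v\|_{L^2(\O_T)})$ and which therefore vanishes by Assumption \ref{thm:uniform} and Lemma \ref{projectionprop}; (b) the terminal contribution $\tst \oh \llangle \xi^{T/h_i}, \xi^{T/h_i} \rrangle + C' \|\xi^{T/h_i}\|_W^2$, which vanishes because $v_i(T,\cdot) = I_i v_T$ and $Q_i v(T,\cdot) = I_i C_i v_T$ both converge to $v_T$ in $W$ by \eqref{eq:Qstab} and Step 2 of Lemma \ref{projectionprop}; and (c) the projection residue \eqref{eq:projcond}.

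The heart of the argument is \eqref{eq:projcond}. I would follow the mechanism of Example \ref{ex:implicit} and decompose $(h_i \sia E - \Id) Q_i v(\iko s,\cdot) + (h_i \sia I + \Id) Q_i v(\ik s,\cdot)$ into its temporal $\Id$-part and its spatial $h_i \sia E, h_i \sia I$-parts. The temporal part is $\sum_k \llangle Q_i v(\iko s,\cdot) - Q_i v(\ik s,\cdot), \xi^k \rrangle$, dominated by $\|\partial_t Q_i v\|_{L^1(\O_T)}\|\xi\|_{L^\infty(\O_T)}$: the first factor is uniformly bounded via \eqref{eq:Qstab}, the second vanishes by Assumption \ref{thm:uniform} together with Lemma \ref{projectionprop}. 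For the spatial principal parts, the nodal identity
\[
\sum_\ell \xi^k(\il y)\, \bia a(\il y)\, \pair{\nabla Q_i v(\iko s,\cdot)}{\nabla \il \phi} = \pair{\nabla Q_i v(\iko s,\cdot)}{\nabla I_i(\bia a \xi^k)},
\]
together with Lemma \ref{superapprox} applied to $g = \sqrt{\bia a}$ (whose $W^{2,\infty}$-norm is uniformly bounded by assumption 1) and $w = \xi^k$, and assumption 2 to bound $\|\sqrt{\bia a}\, \nabla \xi^k\|_{L^2}$ by $\|\xi^k\|_{H^1_i}$, yields an $O(\Delta x_i)$ consistency error which is absorbed into the LHS of \eqref{eq:Hone_split} via Young's inequality. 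The remaining continuous bilinear form $h_i \sum_k \pair{\nabla Q_i v(\iko s,\cdot)}{\nabla(\bia a \xi^k)}$, and its $\bbia a$ analogue at time $\ik s$, together with the lower-order convective and reactive pieces, is handled by the product rule $\nabla(\bia a \xi^k) = \xi^k \nabla \bia a + \bia a \nabla \xi^k$: the first summand vanishes because $\|\xi\|_{L^\infty} \to 0$; for the second I would further split $\nabla Q_i v = \nabla v + \nabla(Q_i v - v)$, where the latter piece vanishes in $L^2(\O_T)$ by Lemma \ref{projectionprop}, and treat the $\nabla v$-piece via Young's inequality with a small Young's constant absorbed on the coercivity side.

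The main obstacle is precisely this final step: the gradient $\nabla \xi^k$ is not a priori small, yet it must be paired against $\nabla Q_i v$, which has only $W$-regularity and does not vanish in the limit; the pairing can still be driven to zero only because each occurrence of $\nabla \xi^k$ carries with it a vanishing prefactor (from $\Delta x_i$ through super-approximation, from $\|Q_i v - v\|_{H^1}$ through Lemma \ref{projectionprop}, or from a free Young's constant absorbed into the LHS via the coercivity). Assumption 1 makes Lemma \ref{superapprox} applicable with a constant uniform in $i$; assumption 2 ensures that these error terms are measured in a norm compatible with $H^1_i$; and assumption 3 supplies both the coercivity itself and the domination $\gamma \lesssim \gamma_i$ needed to pass to $L^2(H^1_\gamma)$. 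Once \eqref{eq:projcond} is in hand, the triangle inequality and $\gamma \in L^\infty(\O)$ give
\[
|v_i - v|_{L^2(H_\gamma^1)} \le |v_i - Q_i v|_{L^2(H_\gamma^1)} + |Q_i v - v|_{L^2(H_\gamma^1)} \lesssim |v_i - Q_i v|_{L^2(H_i^1)} + \|\gamma\|_{L^\infty(\O)}^{1/2}\, \|Q_i v - v\|_{L^2((0,T), H^1(\O))} \to 0,
\]
completing the proof.
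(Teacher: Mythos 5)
Your overall plan matches the paper's: apply the coercivity \eqref{eq:posdef} to $w = v_i - Q_i v$, carry out the manipulations leading to \eqref{eq:Hone_split}, discharge the source, terminal, temporal and lower-order contributions by uniform convergence and \eqref{eq:Qstab}, and then tackle the principal second-order part of \eqref{eq:projcond}. Your handling of the super-approximation residual $\nabla I_i(\bia a\xi^k) - \nabla(\bia a\xi^k)$ via Lemma \ref{superapprox} and Young's inequality (with $O(\Delta x_i)$ prefactor) is also sound, and the product-rule split $\nabla(\bia a\xi^k) = \xi^k\nabla\bia a + \bia a\nabla\xi^k$ with the $\xi^k\nabla\bia a$ piece killed by $\|\xi\|_{L^\infty}\to 0$ is fine as far as it goes.

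The gap is in the final piece, $h_i\sum_k\langle\bia a\,\nabla v(\iko s,\cdot),\nabla\xi^k\rangle$. You propose to estimate it by Young's inequality and absorb $\eps\,\|\sqrt{\bia a}\,\nabla\xi^k\|_{L^2}^2$ into the coercivity on the left. But the companion term $\tfrac{1}{4\eps}\,\|\sqrt{\bia a}\,\nabla v\|_{L^2(\O_T)}^2$ is a fixed positive quantity that does not shrink as $i\to\infty$, since $\nabla v$ is not small. You list ``a free Young's constant absorbed into the LHS via the coercivity'' as one of the mechanisms producing a vanishing prefactor on $\nabla\xi^k$, but a fixed Young parameter $\eps$ only yields a \emph{bound} $(1-C\eps)\,|v_i-Q_iv|_{L^2(H_i^1)}^2 \lesssim o(1) + C/\eps$, not convergence to zero; sending $\eps\to 0$ makes the complementary term blow up. So this step, as written, does not close.

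The paper avoids Young's inequality for this term altogether. It keeps the expression in the form \eqref{eq:implicit}, $\int_0^T \langle J_i\nabla Q_iv,\, J_i\nabla I_i(\bbia a(v_i - Q_iv))\rangle\,\d t$, and uses a \emph{weak--strong product} argument: $J_i\nabla Q_i v$ converges strongly to $\nabla v$ in $L^2(\O_T;\R^d)$ by Lemma \ref{projectionprop}; Lemma \ref{superapprox} together with the coercivity bound \eqref{eq:sob_bound} and assumptions 1 and 2 gives an $L^\infty(L^2)$ bound on $J_i\nabla I_i(\bbia a\xi)$; testing $\nabla I_i(\bbia a\xi)$ against $w\in C_0^1(\O_T;\R^d)$ and integrating by parts, the pairing converges to zero because $I_i(\bbia a\xi)\to 0$ uniformly, which by density and boundedness gives \emph{weak} convergence of $\nabla I_i(\bbia a\xi)$ to zero in $L^2(\O_T;\R^d)$; and strong-times-weak convergence of inner products then kills \eqref{eq:implicit}. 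This compactness/duality step is the key ingredient you need to replace your Young's estimate. Your remaining steps (boundedness of $\sqrt{\bia a}\,\nabla\xi$ from the coercivity and assumption 2, and the requirement that the super-approximation constant be uniform in $i$ from assumption 1) are exactly the inputs the paper uses to set up that weak--strong pairing, so most of your scaffolding carries over once the final estimate is replaced.
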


\begin{proof}
It remains to show \eqref{eq:projcond}. The terms connected to the time derivative in \eqref{eq:projcond} vanish in the limit as
\begin{align} \nonumber
\sum_{k = 0}^{\frac{T}{h_i}-1} \bllangle Q_i v(\iko s,\cdot) - Q_i v(\ik s,\cdot), \xi^k \brrangle = & \sum_{k = 0}^{\frac{T}{h_i}-1} h_i \bllangle (\partial_t Q_i v)|_{(\ik s,\iko s)}, \xi^k \brrangle\\
\lesssim & \; \| \partial_t v \|_{L^1(\Omega_T)} \, \| \xi^k \|_{L^\infty(\Omega_T)},
\end{align}
using the uniform convergence in $\xi^k$. Recall that $\llangle \sia I Q_i v(\ik s, \cdot), \xi^k \rrangle$ is equal to 
\begin{align*}
\sum_\ell (v_i - Q_i v)(\ik s, \il y) \Bigl( \bbia  a(\il y) \langle \nabla Q_i v(\ik s, \cdot), \nabla \il \phi \rangle + \langle \bbia  b \cdot \nabla Q_i v(\ik s, \cdot) + \bbia  c \, Q_i v(\ik s, \cdot), \il \phi \rangle \Bigr).
\end{align*}
The lower-order terms vanish due to the uniform convergence of $ v_i -Q_i v$ to $0$ and the bound
\[
\sup_i \| \bbia  b \cdot \nabla Q_i v(\ik s, \cdot) + \bbia  c \, Q_i v(\ik s, \cdot) \|_{L^1(\O)} < \infty.
\]
We note that
\begin{align*}
\sum_\ell (v_i - Q_i v)(\ik s, \il y) \, \bbia  a(\il y) \, \langle \nabla Q_i v(\ik s, \!\cdot), \! \nabla \il \phi \rangle = \langle \nabla Q_i v(\ik s, \!\cdot\!), \!\nabla I_i (\bbia a(v_i - Q_i v))(\ik s, \!\cdot) \rangle,
\end{align*}
so that in \eqref{eq:projcond} the implicit part of the second-order term becomes
\begin{align} \label{eq:implicit}
\sum_{k = 0}^{\frac{T}{h_i}-1} \!\!\! h_i \, \langle \nabla Q_i v(\ik s, \cdot), \nabla I_i (\bbia a(v_i - Q_i v))(\ik s, \cdot) \rangle
= \int_0^T \! \langle J_i \nabla Q_i v, J_i\nabla I_i (\bbia a(v_i - Q_i v))\rangle \, \d t,
\end{align}
where $J_i$ maps any $w : [0,T] \to L^2(\O;\R^d)$ onto the step function $(J_i w)|_{[\ik s, \iko s)} \equiv w(\ik s, \cdot)$. Observe that $J_i \nabla Q_i v$ converges strongly in $L^2(\O_T; \R^d)$. Owing to Lemma \ref{superapprox}, we have the chain of inequalities
\begin{align*}
\| \nabla I_i (\bbia a \xi) \|_{L^2(\O; \R^d)} & \le \, \| \nabla (\bbia a \xi) \|_{L^2(\O; \R^d)} + \| \nabla (\bbia a \xi - I_i (\bbia a \xi)) \|_{L^2(\O; \R^d)}\\
& \textstyle \hspace{-1cm} \le \, \| \nabla (\bbia a \xi) \|_{L^2(\O; \R^d)} + K \; \Delta x_i \bigl( \| \nabla ( \sqrt{\bbia a} \xi) \|_{L^2(\O)} + \| \xi \|_{L^2(\O)} \bigr)\\
& \textstyle \hspace{-1cm} \le \, (\| \sqrt{\bbia a}\|_{W^{1,\infty}(\O)} + K \; \Delta x_i) \| \sqrt{\bbia a} \nabla \xi \|_{L^2(\O; \R^d)} + (\| \sqrt{\bbia a}\|_{W^{1,\infty}(\O)}^2 + K \; \Delta x_i) \| \xi \|_{L^2(\O)}
\end{align*}
at a time $\ik s \in [0,T)$. In combination with Assumption \ref{thm:uniform} as well as \eqref{eq:Qstab} and \eqref{eq:sob_bound} this gives an $L^\infty(L^2)$ bound over $J_i\nabla I_i (\bbia a(v_i - Q_i v))$ in \eqref{eq:implicit}. The convergence 
\[
\lim_{i \to \infty} \; \int_0^T \langle w, J_i \nabla I_i (\bbia a(v_i - Q_i v))\rangle \d t = - \lim_{i \to \infty} \; \int_0^T \langle \nabla \cdot w, J_I I_i (\bbia a(v_i - Q_i v))\rangle \d t = 0
\]
with test functions $w$ in the dense subset $C_0^1(\O_T; \R^d)$ gives weak convergence of $\nabla I_i (\bbia a(v_i - Q_i v))$ in $L^2(\O_T; \R^d)$, see \cite[p.~121]{Yoshida}. Combining weak and strong convergence \cite[Prop.~21.23]{ZeidlerII}, it is ensured that \eqref{eq:implicit} converges to $0$ as $i \to \infty$. A similar argument shows that $\sum_k h_i \, \llangle \sia E Q_i v(\iko s, \cdot), \xi^k \rrangle$ vanishes in the limit. Therefore we proved \eqref{eq:projcond}.
\hfill \end{proof}

The Sobolev regularity of the value functions is for example discussed in \cite{FS06} and \cite{WWZ} and \cite{Zhou}.

%%%%%%%%%%%%%%%%%%%%%%%%%%%%%%%%%%%%%%%%%%%%%%%%%%%%%%%%%%%%%%%%%%%%%%%%
\section{The method of artificial diffusion} \label{sec:art_diff}
%%%%%%%%%%%%%%%%%%%%%%%%%%%%%%%%%%%%%%%%%%%%%%%%%%%%%%%%%%%%%%%%%%%%%%%%

We illustrate now a way of choosing the coefficients of $\sia E$ and $\sia I$ in order to satisfy the assumptions of the above analysis.

For all $\alpha \in A$ we need to impose the conditions to ensure uniform convergence. For one $\ha \in A$ we wish to enforce terms to guarantee convergence in a Sobolev space with a weight associated to $L^\ha$.

\begin{remark}
One could define a set $B \subset A$ of multiple indices $\ha$ for which one wishes to derive Sobolev norm bounds, which are associated to different weights $\gamma^\ha$ and $\gamma_i^\ha$. The analysis generalises directly.
\end{remark}

Given a function $g : \Omega \to \R^d$ and an element $K$ of the mesh $\calT_i$, we denote
\[
|g|_K:= \Bigl( \sum_{j=1}^{d} \bigl\| g_j \bigr\|_{L^\infty(K)}^2 \Bigr)^{\half}.
\]
If $g$ is elementwise constant then $|g|_K$ is simply the Euclidean norm of $g$ on $K$. Let $\Delta x_K$ denote the diameter of $K$. We assume that the meshes $\calT_i$ are strictly acute, cf.~\cite{BE02}, in the sense that there exists $\theta \in (0,\pi/2)$ such that for all $i \in \N$:
\begin{equation}\label{eq:acutemesh}
\nabla \il \phi \cdot \nabla \phi_{i}^{l} \bigl|_K \leq - \, \sin(\theta) \; | \nabla \il \phi |_K \; | \nabla \phi^{l}_i |_K \qquad \forall \ell,l \leq \dim V_i,\;\ell \neq l, \; \forall K\in\calT_i.
\end{equation}
We choose a splitting of the form $a^{\a} = \tia a + \ttia a$, $b^{\a} = \tia b + \ttia b$, $c^{\a} = \tia c + \ttia c$, which does not depend on $i \in \N$. It is generally necessary to add artificial diffusion to the second-order coefficients, that means that we generally need to determine $i$-dependent coefficients $\bia a \ge \tia a$ and $\bbia a \ge \ttia a$. It can also be necessary to construct $\bbia c \ge \ttia c$. For the other lower-order terms we can use the above splitting directly: For all $i \in \N$ set
\[
\bia b = \tia b, \bbia b = \ttia b, \bia c = \tia c \text{ and also } f_i^\alpha=f^\alpha,
\]
where all terms are in $C(\oO)$, $\tia a$ and $\ttia a$ are non-negative and all $ \tia c$ and $ \ttia c$ are non-negative and satisfy inequality \eqref{react}.

For instance, one could discretise symmetric terms implicitly, i.e.~$a^{\a} = \ttia a$ and $c^{\a} = \ttia c$, and screw-symmetric terms explicitly, i.e.~$b^{\a} = \tia b$. With this approach the coercivity properties of $L^\alpha$ are well incorporated. Alternatively, if there is coercivity which is uniform in $\alpha$ with respect to a useful weight $\gamma$, then it is interesting to select $\sia I = {\sf I}_i$ independently of $\alpha$ because in this case only a linear system needs to be solved at each time step while an ${\sf O}(\Delta x_i)$ time-step may be preserved. We also refer to \cite{JS12} for further illustrations of operator splittings.

To obtain uniform convergence we select the non-negative parameters $ \bia \nu $ and $ \bbia \nu $ such that for all $K$ that have $\il y$ as vertex:
\begin{subequations}\label{ineq:nut}
\begin{align}
\bigl( |  \bia b |_K \, +  \Delta x_K \|  \bia c \|_{L^\infty(K)} \bigr) \le \, &  \bia \nu \, \sin(\theta) \, | \nabla \hil \phi |_K \, {\rm vol}(K), \\
\bigl( | \bbia b |_K \, +  \Delta x_K \| \bbia c \|_{L^\infty(K)} \bigr) \le \, & \bbia \nu \, \sin(\theta) \, | \nabla \hil \phi |_K \, {\rm vol}(K).
\end{align}
\end{subequations}

For all $\a \in A \setminus \{ \ha \}$ we choose $ \bia a $ and $\bbia a $ both in $C(\oO)$ such that at the nodes
\begin{align} \label{diffusion}
\left. \begin{array}{rl}
\bia a(\il y) \hspace*{-3mm} & \geq \tia a(\il y) + \bia \nu\\[1mm]
\bbia a(\il y) \hspace*{-3mm} & \geq \ttia a(\il y) + \bbia \nu
\end{array} \right\}
\end{align}
Moreover, we set $\bbia c = \ttia c$.

We now turn to convergence in the weighted Sobolev norm associated with the control $\ha$. We require that $\sqrt{\tha a}, \sqrt{\ttha a} \in W^{2,\infty}(\O)$ as well as
\begin{align} \nonumber
\tst \ttha c - \oh (\nabla \cdot \ttha b + \Delta \ttha a) & \ge 0,\\
\tst \tha c - \oh (\nabla \cdot \tha b + \Delta \tha a) & \ge 0, \label{Sobolevcond}\\
\gamma := \ttha a - \tha a & \ge 0 \nonumber
\end{align}
with $\tha a \lesssim \gamma$ and $\ttha a \lesssim \gamma$. We choose $ \biha a $ and $\bbiha a $ both in $C(\oO)$ such that at the nodes
\begin{align} \label{Sob_diffusion}
\left. \begin{array}{rl}
\biha a(\il y) \hspace*{-3mm} & \geq \tha a(\il y) + \biha \nu\\[1mm]
\bbiha a(\il y) \hspace*{-3mm} & \geq \ttha a(\il y) + \mu_i
\end{array} \right\}
\end{align}
such that for large $i$
\begin{align} \label{def:mu}
\mu_i \ge \max \Bigl\{ \bbiha \nu, 2 \, K \, \Delta x_i \, \Bigl( \| \sqrt{\biha a} \|_{W^{1,\infty}(\O)} + \| \sqrt{\bbiha a} \|_{W^{1,\infty}(\O)} + 2 \Bigr) + h_i \| \nabla \tia a + \tha b \|_{L^\infty}^2 \Bigr\}.
\end{align}
Notice the recursive nature of the definition of $\bbiha a$ which appears also on the right-hand side of \eqref{def:mu}. For $\Delta x_i$ small enough it is clear that $\mu_i$ can be chosen so that is satisfies \eqref{def:mu}. Finally, we set 
\[
\bbiha c = \ttha c + K \, \Delta x_i \, \Bigl( \| \sqrt{\biha a} \|_{W^{1,\infty}(\O)} + 1 \Bigr) + h_i \| \tha c \|_{L^\infty}^2.
\]
Sections \ref{ver_uniform} and \ref{ver_Sobolev} below show that, under suitable time-step restrictions, \eqref{diffusion} gives convergence.

\subsection{Verification of uniform convergence} \label{ver_uniform}

Suppose that $ w \in V_i$ has a non-positive local minimum at an interior node $ \il y$. It was shown in \cite{JS13} that then
\begin{align} \label{av_mono}
(\sia E w)_\ell \le 0, \qquad (\sia I w)_\ell \le 0,
\end{align}
and, if $\bia \nu$ and $\bbia \nu$ are chosen optimally, then for $K \in \calT_i$
\begin{align} \label{eq:artdifforder} 
\left. \begin{array}{rl}
\bia \nu \hspace*{-3mm} & =  \mathsf{O} \big( \sup_{K} \bigl\{ | \bia b|_K \Delta x_K + \| \bia c \|_{L^\infty(K)} \Delta x_K^2 \bigr\} \big),\\[1mm]
\bbia \nu \hspace*{-3mm} & =  \mathsf{O}\big( \sup_{K} \bigl\{ | \bbia b|_K \Delta x_K + \| \bbia c \|_{L^\infty(K)} \Delta x_K^2 \bigr\} \big).
\end{array} \right\}
\end{align}
Note that \eqref{eq:artdifforder} is consistent with Assumption \ref{consistency}. We turn to time step restrictions for semi-implicit and explicit methods which give Assumption \ref{thm:monotonicity}. The non-positivity of the diagonal terms of $h_i \sia E - \Id$ expands to
\begin{align*}
1 \ge \, & h_i \Bigl( \bia  a(\il y) \langle \nabla \il \phi, \nabla \hil \phi \rangle + \langle \bia  b \cdot \nabla \il \phi + \bia  c \, \il \phi, \hil \phi \rangle \Bigr)\\
= \, & h_i \Bigl( {\mathsf O} \bigl( \bia a \, \Delta x_K^{-2} \bigr) + {\mathsf O} \bigl( | \bia b |_K \, \Delta x_K^{-1} \bigr) + {\mathsf O} \bigl( \bia c \bigr) \Bigr).
\end{align*}
Therefore the time step restriction imposed by $L^\alpha$ is $h_i \lesssim \inf_K \bigl( \Delta x_K^2 / \bia a(\il y)\bigr)$, $\il y \in \overline K$, if there is a non-zero $\tia a$ and $i$ is large. It is $h_i \lesssim \inf_K \bigl( \Delta x_K / |\bia b(\il y)|_K \bigr)$ if all $\bia a = 0$, $i \in \N$, and there are non-zero $\bia b$, and is $\mathsf{O}(1)$ if all $\bia a$ and $\bia b$ vanish. There is no restriction if also all $\bia c$ are zero. If the scheme is fully implicit, there are no time-step restrictions.

Assumption \ref{thm:uniform} holds if there is an orthogonal projection, boundary control and comparison, see page \pageref{orthproj}. The former is essentially a quasi-uniformity assumption on the mesh, cf. \cite{DLSW}, the latter two on the boundary value problem. In fact, the comparison principle is one of the building blocks in the theory of viscosity solutions.

\subsection{Verification of Sobolev convergence} \label{ver_Sobolev}

The main step of this section is the proof that \eqref{eq:posdef} is satisfied. With $\ko w = w(\iko s,\cdot)$ and $\kz w = w(\ik s,\cdot)$,
\begin{align} 
\bllangle \siha E \ko w, \kz w \brrangle = 
\langle \nabla \ko w, \nabla I_i (\biha a \kz w) \rangle + \langle \tha b \cdot \nabla \ko w + \tha  c \, \ko w, \ko w + (\kz w - \ko w) \rangle,
\end{align}
using the interpolation operator as in \eqref{hot}. We also find, as in \eqref{coersuper}, for large $i$
\begin{align*}
| \langle \nabla \ko w, \nabla I_i (\biha a \kz w) \rangle - \langle \nabla \ko w, \nabla \biha a \kz w \rangle | \le \textstyle K \; \Delta x_i \, \big( \big\| \sqrt{\biha a} \big\|_{W^{1,\infty}(\O)} + 1\big) \, | \ko w |_{H^1(\O)} \, \| \kz w \|_{H^1(\O)}.
\end{align*}
Therefore,
\begin{align*}
\langle \nabla \ko w, \nabla I_i (\biha a \kz w) \rangle \ge \, & \, \langle \nabla \ko w, \biha a \nabla \kz w \rangle + \langle \nabla \ko w,  \ko w \nabla \biha a \rangle + \langle \nabla \ko w, (\kz w - \ko w) \nabla \biha a \rangle\\[1mm]
& - \textstyle K \; \Delta x_i \, \big( \big\| \sqrt{\biha a} \big\|_{W^{1,\infty}(\O)} + 1\big) \, | \ko w |_{H^1(\O)} \, \| \kz w \|_{H^1(\O)}.
\end{align*}
Recalling \eqref{Sobolevcond} and using that $\tha a - \biha a$ is constant, we see that
\begin{align*}
\langle \nabla \ko w,  \ko w \nabla \biha a \rangle + \langle \tha b \cdot \nabla \ko w + \tha  c \, \ko w, \ko w \rangle =
\langle \tst (\tha c - \oh (\nabla \cdot \tha b + \Delta \tha a)) \ko w, \ko w \rangle \ge 0.
\end{align*}
Now
\begin{align*}
& | \langle (\nabla \biha a + \tha b) \cdot \nabla \ko w + \tha c \, \ko w, \kz w - \ko w \rangle | \\
\le & \tst \frac{h_i}{2} \bigl( \| \nabla \tia a + \tha b \|_{L^\infty}^2 \, \| \nabla \ko w \|_{L^2}^2 +  \| \tha c \|_{L^\infty}^2 \, \| \ko w \|_{L^2}^2 \bigr) + \frac{1}{2 h_i} \| \kz w - \ko w \|_{L^2}^2\\
\le & \tst \frac{h_i}{2} \bigl( \| \nabla \tia a + \tha b \|_{L^\infty}^2 \, \| \nabla \ko w \|_{L^2}^2 +  \| \tha c \|_{L^\infty}^2 \, \| \ko w \|_{L^2}^2 \bigr) + \frac{1}{2 h_i} \bllangle \kz w - \ko w, \kz w - \ko w \brrangle
\end{align*}
where we can use Jensen's inequality as $\sum_\ell \il \phi (x) = 1$ is a convex combination:
\begin{align*}
\| \kz w - \ko w \|_{L^2}^2 = & \int_\O \Bigl( \sum_\ell \bigl( \kz w(\il y) - \ko w(\il y) \bigr) \il \phi(x) \Bigr)^2 \d x\\
\le & \int_\O \sum_\ell \bigl(\kz w(\il y) - \ko w(\il y) \bigr)^2 \il \phi(x) \d x = \bllangle \kz w - \ko w, \kz w - \ko w \brrangle.
\end{align*}
We summarise
\begin{align} \label{exp_terms}
\bllangle \siha E \ko w, \kz w \brrangle \ge \, & \tst - \oh \big\| \sqrt{\biha a} \nabla \ko w \big\|_{L^2}^2 - \oh \big\| \sqrt{\biha a} \nabla \kz w \big\|_{L^2}^2\\ \nonumber
& - \textstyle K \; \Delta x_i \, \big(\big\| \sqrt{\biha a} \big\|_{W^{1,\infty}(\O)} + 1\big) \, \bigl( \tst \oh | \ko w |_{H^1(\O)}^2  + \oh \| \kz w \|_{H^1(\O)}^2 \bigr) \\
& - \tst \frac{h_i}{2} \bigl( \| \nabla \tia a + \tha b \|_{L^\infty}^2 \, \| \nabla \ko w \|_{L^2}^2 +  \| \tha c \|_{L^\infty}^2 \, \| \ko w \|_{L^2}^2 \bigr) - \frac{1}{2 h_i} \bllangle \kz w - \ko w, \kz w - \ko w \brrangle. \nonumber
\end{align}
As in Example \ref{ex:implicit}, for large $i$,
\begin{align} \label{imp_terms}
\bllangle \siha I \kz w, \kz w \brrangle \ge & \; %\langle - \ttha a \, \Delta \kz w + \ttha b \cdot \nabla \kz w + \ttha c \, \kz w, \kz w \rangle\\
\langle  \ttha a \Delta \kz w, \Delta \kz w \rangle + \langle (\ttha c - \tst \oh (\nabla \cdot \ttha b + \Delta \ttha a) ) \kz w, \kz w \rangle\\ \nonumber
& + \Bigl( K \, \Delta x_i \, \Bigl( 2 \, \| \sqrt{\biha a} \|_{W^{1,\infty}} + \| \sqrt{\bbiha a} \|_{W^{1,\infty}} + 3 \Bigr) + h_i \| \nabla \tia a + \tha b \|_{L^\infty}^2 \Bigr) \langle \nabla \kz w, \nabla \kz w \rangle\\
& + \Bigl( \oh K \, \Delta x_i \, \Bigl( \| \sqrt{\biha a} \|_{W^{1,\infty}(\O)} + 1 \Bigr) + \oh h_i \| \tha c \|_{L^\infty}^2 \Bigr) \langle \kz w, \kz w \rangle. \nonumber
\end{align}
We observe that all terms on the right-hand side of \eqref{exp_terms} can be bounded by corresponding terms in \eqref{imp_terms} of time $k$ or $k+1$---except if $w$ is evaluated at the final time. We choose $C'$ such that
\begin{align*}
C' \|  w^{T/h_i} \|_W^2 \ge \; & \tst \oh \big\| \sqrt{\biha a} \nabla w^{T/h_i} \big\|_{L^2}^2 + \oh \textstyle K \; \Delta x_i \, \big(\big\| \sqrt{\biha a} \big\|_{W^{1,\infty}(\O)} + 1\big) | w^{T/h_i} |_{H^1(\O)}^2 + h_i \| \gamma_i \nabla  w^{T/h_i} \|_{L^2}^2\\
& + \tst \frac{h_i}{2} \bigl( \| \nabla \tia a + \tha b \|_{L^\infty}^2 \, \| \nabla w^{T/h_i} \|_{L^2}^2 +  \| \tha c \|_{L^\infty}^2 \, \| w^{T/h_i} \|_{L^2}^2 \bigr).
\end{align*}
Then we obtain \eqref{eq:posdef} with $\gamma_i = \gamma + \tst \frac{\mu_i}{2}$:
\begin{align*}  
& \sum_{k = 0}^{\frac{T}{h_i}-1} \Bigl( h_i \bllangle \sia E \ko w + \sia I \kz w, \kz w \brrangle \tst
 + \tst \oh \llangle \ko w - \kz w, \ko w - \kz w \rrangle \Bigr) + \tst \oh \llangle w^0, w^0 \rrangle + C' \| w^{T/h_i} \|_W^2\\
\ge & \sum_{k = 0}^{\frac{T}{h_i}} \Bigl( h_i \langle \gamma_i \nabla w^k, \nabla w^k \rangle \Bigr) + \tst \oh \llangle w^0, w^0 \rrangle \ge | w |_{L^2((0,T), H_i^1(\O))}^2,
\end{align*}
where the last inequality follows because the composite trapezium rule is exact for functions in $W_i$. Finally, observe that $\gamma \lesssim \gamma_i$ and that there is a $C > 0$ such that $\bia a \le C \gamma_i$ and $\bbia a \le C \gamma_i$ for all $i \in \N$, as required in the statement of Theorem \ref{thm:main}.

%%%%%%%%%%%%%%%%%%%%%%%%%%%%%%%%%%%%%%%%%%%%%%%%%%%%%%%%%%%%%%%%%%%%%%%%

\small

%%%%%%%%%%%%%%%%%%%%%%%%%%%%%%%%%%%%%%%%%%%%%%%%%%%%%%%%%%%%%%%%%%%%%%%%
\end{document}